\documentclass[11pt]{article}

\usepackage[margin=1in]{geometry}
\usepackage{amsmath,amssymb,amsthm}
\usepackage{bm}
\usepackage{booktabs}
\usepackage{xcolor}
\usepackage{microtype}
\usepackage{url}
\usepackage[hidelinks]{hyperref}

\theoremstyle{plain}
\newtheorem{theorem}{Theorem}
\newtheorem{lemma}[theorem]{Lemma}
\newtheorem{proposition}[theorem]{Proposition}
\newtheorem{corollary}[theorem]{Corollary}
\theoremstyle{definition}
\newtheorem{definition}{Definition}
\newtheorem{assumption}{Assumption}
\theoremstyle{remark}

\DeclareMathOperator{\dist}{dist}
\DeclareMathOperator{\Var}{Var}
\newcommand{\SCV}{\mathrm{SCV}}

\newcommand{\ind}{\mathbf{1}}
\newcommand{\PH}{\mathrm{PH}}
\newcommand{\ME}{\mathrm{ME}}
\newcommand{\E}{\mathbb{E}}
\newcommand{\e}{\mathrm{e}}
\newcommand{\ii}{\mathrm{i}}
\newcommand{\dd}{\mathrm{d}}
\newcommand{\bfalp}{\bm{\alpha}}
\newcommand{\bfs}{\bm{s}}
\newcommand{\Z}{\mathbb{Z}}
\newcommand{\R}{\mathbb{R}}
\newcommand{\N}{\mathbb{N}}
\newcommand{\eqd}{\overset{d}{=}}

\title{Optimization-Free Concentrated Matrix-Exponentials}
\author{Maria Laura Battagliola \and Oscar Peralta}
\date{}

\begin{document}
\maketitle

\begin{abstract}
Near-deterministic positive delays require highly concentrated distributions, but phase-type models are constrained by the Erlang variance limit. While matrix-exponential distributions can empirically bypass this barrier, prior low-variance constructions relied entirely on numerical optimization. We propose an explicit family of concentrated matrix-exponential densities for the unit delay, obtained by raising the trigonometric Fej\'er kernel to logarithmic power. With exact moments and closed-form parameters, this gives the first analytical proof of a matrix-exponential class that asymptotically surpasses the Erlang bound.
\end{abstract}

\noindent\textbf{Keywords:} Matrix-exponential distributions, Fej\'er kernel, squared coefficient of variation, Erlangization

\section{Introduction}\label{sec:intro}
Approximating nearly deterministic and strictly positive delays requires probability distributions with very small squared coefficient of variation $\SCV(X)=\Var(X)/\E[X]^2$, which equals $0$ for a deterministic delay and is invariant under scaling. We therefore take the deterministic target to be the unit delay, without loss of generality. Within the phase-type (PH) class, the theorem of Aldous and Shepp states that, among all order-$n$ laws $X\in\PH(n)$, the Erlang law is least variable, with $\SCV(X)=1/n$ \cite{aldous1987}; approximating deterministic delays by Erlang laws is accordingly called Erlangization.
The matrix-exponential (ME) class is a broader rational-transform framework: ME laws have rational Laplace transforms and finite-dimensional moment formulas, yet can achieve substantially lower variability than PH laws of the same order. Numerical concentrated-ME methods \cite{horvath2016,horvath2020,meszaros2022} showed by high-dimensional optimization that ME distributions with complex-conjugate poles can far surpass the Erlang bound at many finite orders; this has also been exploited in numerical Laplace inversion \cite{horvathnilt2020}. Whether this superiority persists asymptotically remained open: each low-SCV construction required a finite-$n$ numerical search.

This paper introduces the powered-{Fej\'er} concentrated matrix-exponential (PF-CME) family. The {Fej\'er} kernel $\Phi_m(\cdot)$ is a nonnegative trigonometric polynomial of order $m$ and provides a natural approximate identity inside the common-damping ME class. By raising this kernel to the logarithmically growing power $r_m=\lceil\log m\rceil$ and choosing $h_m=2\log m+\log\log m$, we obtain an explicit ME family whose SCV decays, up to constant factors, as $\log m/m^2$ in the construction index and as $\log^3 n_m/n_m^2$ in the corresponding ME dimension $n_m$. This is the first such concentration law for a common-damping ME family, confirming that the numerical superiority over Erlang in \cite{horvath2016,horvath2020,meszaros2022} is structural. The precise concentration statement is given in Theorem~\ref{thm:main}.

Beyond this asymptotic result, the explicit parameters of the PF-CME family are directly applicable to numerical inverse Laplace transformation (NILT). Its density is nonnegative by construction, avoiding the Gibbs-type overshoots in some classical inversion formulas \cite{horvathnilt2020}. Practically, the family is asymptotically superior, but its SCV remains above the Erlang bound below order about $7260$. Thus PF-CME complements, rather than replaces, Erlang and optimized CME families at moderate orders.

Throughout we use standard asymptotic notation: $f(m)=O(g(m))$ means
$|f(m)|\le Cg(m)$ eventually for some $C>0$;
$f(m)=\Theta(g(m))$, equivalently $f(m)\asymp g(m)$, means
$c g(m)\le f(m)\le Cg(m)$ eventually for some $c,C>0$;
$f(m)=o(g(m))$ means $f(m)/g(m)\to0$; and $f(u)\propto g(u)$
denotes proportionality up to a positive factor.

\section{The common-damping framework}\label{sec:background}

A phase-type law is the absorption time of a finite-state Markov jump process. If $X\sim\PH(n)$, its density takes the form $f(t)=\bfalp\,\e^{St}(-S{\bm{1}})$ with sub-intensity matrix $S$ and initial distribution $\bfalp$; positivity is automatic, and the Aldous--Shepp bound gives
\begin{equation}\label{eq:erlangbound}
  \inf\bigl\{\SCV(X):X\in\PH(n)\bigr\}=\frac{1}{n}.
\end{equation}
The class $\ME(n)$ replaces $S$ with a general real matrix $A$, requiring only a real, nonnegative density that integrates to $1$ \cite[Chapters 3--4]{bladt2017}. A density belongs to the ME class if and only if its Laplace transform is rational; its minimal algebraic order is the degree of the rational transform. In an order-$n$ representation it can be written as $f(t)=\bfalp\,\e^{At}\bfs$ with moments $\mu_k=k!\bfalp(-A)^{-(k+1)}\bfs$. Because $\ME(n)\supset\PH(n)$ strictly, the bound \eqref{eq:erlangbound} does not apply, and substantially lower SCV values are achievable. Matrix-exponential distributions numerically optimized to approach this deterministic limit are termed concentrated matrix-exponential (CME) laws.

If an ME density has only simple poles $-\beta$ and $-\beta\pm\ii\ell\omega$ ($\ell=1,\dots,L$), all sharing the same real part, then
\begin{equation}\label{eq:oscillatoryform}
  f(t)=\e^{-\beta t}\Bigl(c_0+\sum_{\ell=1}^{L}\bigl(c_\ell\cos(\ell\omega t)+d_\ell\sin(\ell\omega t)\bigr)\Bigr),
\end{equation}
an exponentially damped trigonometric polynomial, with moment integrals reducing via $\int_0^\infty t^p\e^{-(\beta-\ii\ell\omega)t}\dd t=p!(\beta-\ii\ell\omega)^{-(p+1)}$. 
Optimized CME laws often exhibit this common real part after normalization \cite{horvath2020,horvath2018}, motivating the following analytically tractable subclass.

\begin{assumption}[Common-damping framework]\label{ass:commondamping}
Fix $L\in\N$. After scaling so that the common real part equals $-1$, consider densities of the form
\begin{equation}\label{eq:cdform}
  f(t)=C\,\e^{-t}\,G(\omega t+\phi),\qquad t\ge 0,
\end{equation}
where $\omega>0$, $\phi\in\R$, $C>0$, and $G$ is a nonnegative trigonometric polynomial of degree $L$.
\end{assumption}

The class \eqref{eq:cdform} has one real pole at $-1$ and at most $L$ conjugate pairs at $-1\pm\ii\ell\omega$, so its algebraic order is at most $2L+1$, with equality if every frequency appears with nonzero coefficient.

\section{The PF-CME family}\label{sec:family}

For $m\ge 2$, the Fej\'er kernel of order $m$ is
\begin{equation}\label{eq:Fejer}
  \Phi_m(\theta)=1+2\sum_{\ell=1}^{m-1}\!\left(1-\frac{\ell}{m}\right)\cos(\ell\theta)
  =\frac{1}{m}\!\left(\frac{\sin(m\theta/2)}{\sin(\theta/2)}\right)^{\!2}.
\end{equation}
It is nonnegative, has degree $m-1$, and attains its maximum of $m$ at $\theta=0$ \cite[Vol. I, Chap. III, Sec. 3]{zygmund2002trigonometric}.

Throughout set
\begin{equation}\label{eq:params}
  r_m:=\lceil\log m\rceil,\quad h_m:=2\log m+\log\log m,\quad \omega_m:=\frac{2\pi}{h_m}.
\end{equation}

\begin{definition}[PF-CME density]\label{def:PFfamily}
The powered-Fej\'er concentrated matrix-exponential density of index $m\ge 2$ is
\begin{equation}\label{eq:densitypf}
  f_m(t):=C_m\,\e^{-t}\,W_m\!\bigl(\omega_m(t-1)\bigr),\qquad t\ge 0,
\end{equation}
where ${W_m(\theta):=\bigl(\Phi_m(\theta)\bigr)^{r_m}}$ and $C_m>0$.
We write $X_m\sim\mathrm{PF\mbox{-}CME}_m$ for the random variable with density $f_m$.
\end{definition}

The shift places the principal peak at $t=1$, the unit deterministic delay used here without loss of generality, while $\omega_m=2\pi/h_m$ makes $[0,h_m)$ one full modulation period. Normalization need not impose $\E[X_m]=1$ exactly, but our main result in Theorem~\ref{thm:main} gives $\E[X_m]\to1$. We now identify the ME order.

\begin{proposition}[ME order]\label{prop:order}
For each $m\ge2$, $X_m\sim\mathrm{PF\mbox{-}CME}_m$ has minimal ME order $n_m:=2r_m(m-1)+1$.
\end{proposition}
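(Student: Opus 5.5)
The plan is to compute the Laplace transform of $f_m$ explicitly and read off its degree as a reduced rational function. Since $\Phi_m$ is a trigonometric polynomial of degree $m-1$ with positive Fourier coefficients, $W_m=\Phi_m^{r_m}$ is a trigonometric polynomial of degree exactly $L:=r_m(m-1)$. The product of $r_m$ copies of the coefficient sequence $\hat\Phi_m(\ell)=(1-|\ell|/m)_+$ is a convolution of nonnegative sequences supported on $\{-(m-1),\dots,m-1\}$. Its top coefficient is therefore $\hat W_m(\pm L)=m^{-r_m}>0$, and every coefficient $\hat W_m(\ell)$ with $|\ell|\le L$ is strictly positive, being a sum of positive products.

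Write $W_m(\theta)=\sum_{|\ell|\le L}\hat W_m(\ell)\e^{\ii\ell\theta}$ and substitute $\theta=\omega_m(t-1)$. This gives
\[
f_m(t)=C_m\sum_{|\ell|\le L}\hat W_m(\ell)\,\e^{-\ii\ell\omega_m}\,\e^{-(1-\ii\ell\omega_m)t},
\]
so the Laplace transform is
\[
\widehat f_m(s)=C_m\sum_{|\ell|\le L}\frac{a_\ell}{s+1-\ii\ell\omega_m},\qquad a_\ell:=\hat W_m(\ell)\e^{-\ii\ell\omega_m}.
\]
The $2L+1$ poles $-1+\ii\ell\omega_m$ are distinct, because $\omega_m>0$, and every residue $C_ma_\ell$ is nonzero. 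A partial-fraction sum with distinct simple poles and nonzero residues cannot cancel: over the common denominator $\prod_\ell(s+1-\ii\ell\omega_m)$, the numerator evaluated at each root is a nonzero multiple of the corresponding $a_\ell$. The reduced rational function therefore has denominator degree exactly $2L+1$. This is consistent with the remark following Assumption~\ref{ass:commondamping}.

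To conclude, I invoke the characterization recalled in Section~\ref{sec:background}: the minimal ME order equals the degree of the reduced rational Laplace transform. Hence $n_m=2L+1=2r_m(m-1)+1$.

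The step that needs care is the claim that the extreme coefficients $\hat W_m(\pm L)$, and indeed all coefficients, are nonzero. This is where degree-drop or cancellation could in principle occur, but positivity of the Fej\'er coefficients settles it at once. The only other thing to check is that the phase factor $\e^{-\ii\ell\omega_m}$ has modulus one, so it cannot annihilate any residue.
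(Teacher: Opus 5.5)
Your proof is correct and follows essentially the same route as the paper. Both arguments expand $W_m=\Phi_m^{r_m}$ through the $r_m$-fold convolution of the Fej\'er coefficients, use strict positivity of every coefficient up to degree $L=r_m(m-1)$ together with the unimodular phase $\e^{-\ii\ell\omega_m}$ to get nonzero residues at the $2L+1$ distinct simple poles $-1+\ii\ell\omega_m$, and conclude from the fact that the minimal ME order equals the degree of the reduced Laplace transform; your explicit evaluation of the numerator at each root, and the value $\hat W_m(\pm L)=m^{-r_m}$, simply spell out the non-cancellation step that the paper states briefly.
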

\begin{proof}

In complex form, the Fej\'er kernel satisfies
\[
  \Phi_m(\theta)
  =\frac1m\left|\sum_{j=0}^{m-1}\e^{\ii j\theta}\right|^2
  =\sum_{|k|\le m-1}\left(1-\frac{|k|}{m}\right)\e^{\ii k\theta}=\sum_{{k\in\mathbb Z}}{\gamma_k}\e^{\ii k\theta}.
\]
Here ${\gamma_k}=1-|k|/m$ for $|k|\le m-1$ and ${\gamma_k}=0$ otherwise. Since
$W_m=\Phi_m^{r_m}$, pointwise multiplication of trigonometric polynomials
corresponds to discrete convolution of their Fourier sequences. Hence
$W_m$ is a nonnegative trigonometric polynomial of degree
$L_m:=r_m(m-1)$ with cosine expansion

\begin{equation}\label{eq:Wexpansion}
  W_m(\theta)=B_{0,m}+2\sum_{\ell=1}^{L_m}B_{\ell,m}\cos(\ell\theta),
\end{equation}
where $B_{\ell,m}$ is the $\ell$-th coefficient of the $r_m$-fold discrete
convolution of ${(\gamma_k)}$.
Substituting $\theta=\omega_m(t-1)$ and applying 
$\cos(a-b)=\cos a\cos b+\sin a\sin b$ yields
\begin{equation}\label{eq:fexpanded}
  f_m(t)=C_m\e^{-t}\Bigl[B_{0,m}+2\sum_{\ell=1}^{L_m}B_{\ell,m}
  \bigl(\cos(\ell\omega_m)\cos(\ell\omega_m t)+\sin(\ell\omega_m)\sin(\ell\omega_m t)\bigr)\Bigr],
\end{equation}
a linear combination of $\e^{-t}$, $\e^{-t}\cos(\ell\omega_m t)$, and 
$\e^{-t}\sin(\ell\omega_m t)$ for $\ell=1,\dots,L_m$, whose Laplace transforms 
share the denominator $(s+1)\prod_{\ell=1}^{L_m}((s+1)^2+\ell^2\omega_m^2)$ 
of degree $n_m$. Since convolution preserves strict positivity on gap-free finite supports, all $B_{\ell,m}>0$, $0\le \ell\le L_m$. After the shift, the complex coefficient at frequency $\ell$ is $B_{\ell,m}\e^{-\ii\ell\omega_m}\neq0$, so every pole pair has nonzero residue and no cancellation occurs. Thus the Laplace transform has degree exactly $n_m$.
\end{proof}

The moments and SCV use the basis integrals
\begin{align}
  J_0(a)&=\int_0^\infty \e^{-t}\cos(at)\,\dd t=\frac{1}{1+a^2}, &
  S_0(a)&=\int_0^\infty \e^{-t}\sin(at)\,\dd t=\frac{a}{1+a^2},\label{eq:JS0}\\
  J_1(a)&=\int_0^\infty t\e^{-t}\cos(at)\,\dd t=\frac{1-a^2}{(1+a^2)^2}, &
  S_1(a)&=\int_0^\infty t\e^{-t}\sin(at)\,\dd t=\frac{2a}{(1+a^2)^2},\label{eq:JS1}\\
  J_2(a)&=\int_0^\infty t^2\e^{-t}\cos(at)\,\dd t=\frac{2(1-3a^2)}{(1+a^2)^3}, &
  S_2(a)&=\int_0^\infty t^2\e^{-t}\sin(at)\,\dd t=\frac{2a(3-a^2)}{(1+a^2)^3}.\label{eq:JS2}
\end{align}

\begin{theorem}[Exact moments]\label{thm:moments}
For $k=0,1,2$, define
\[
  M_k(m):=B_{0,m}J_k(0)+2\sum_{\ell=1}^{L_m}B_{\ell,m}\Bigl(\cos(\ell\omega_m)J_k(\ell\omega_m)+\sin(\ell\omega_m)S_k(\ell\omega_m)\Bigr).
\]
Then $C_m=M_0(m)^{-1}$, $\E[X_m]=M_1(m)/M_0(m)$, $\E[X_m^2]=M_2(m)/M_0(m)$, and $\SCV(X_m)=\tfrac{M_2(m)M_0(m)}{M_1(m)^2}-1$.
\end{theorem}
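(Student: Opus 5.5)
The plan is to integrate the expanded form \eqref{eq:fexpanded} from the proof of Proposition~\ref{prop:order} term by term against $t^k$ for $k=0,1,2$. The first step is to observe that
\[
  \int_0^\infty t^k f_m(t)\,\dd t = C_m M_k(m).
\]
Indeed, $f_m(t)/C_m$ is a finite linear combination of $\e^{-t}$, $\e^{-t}\cos(\ell\omega_m t)$ and $\e^{-t}\sin(\ell\omega_m t)$ for $\ell=1,\dots,L_m$. Multiplying by $t^k$ and integrating over $[0,\infty)$ is legitimate summand by summand. The sum is finite, and each integrand is bounded by $t^k\e^{-t}$ times a constant, so each integral converges absolutely.

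The constant term contributes $B_{0,m}\int_0^\infty t^k\e^{-t}\,\dd t=B_{0,m}J_k(0)$, since $J_k(0)=k!$ from \eqref{eq:JS0}--\eqref{eq:JS2} with $a=0$. The frequency-$\ell$ term contributes $2B_{\ell,m}\bigl(\cos(\ell\omega_m)J_k(\ell\omega_m)+\sin(\ell\omega_m)S_k(\ell\omega_m)\bigr)$ by definition of $J_k$ and $S_k$. This reproduces $M_k(m)$ exactly.

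I would also verify the closed forms \eqref{eq:JS0}--\eqref{eq:JS2}, since they carry the actual content. Write
\[
  J_k(a)+\ii S_k(a)=\int_0^\infty t^k\e^{-(1-\ii a)t}\,\dd t = \frac{k!}{(1-\ii a)^{k+1}}
  = \frac{k!\,(1+\ii a)^{k+1}}{(1+a^2)^{k+1}}.
\]
This is the identity already quoted in Section~\ref{sec:background} with $\beta=1$. Expanding $(1+\ii a)^{k+1}$ for $k=0,1,2$ and taking real and imaginary parts gives the stated formulas. For example, $(1+\ii a)^3=1-3a^2+\ii(3a-a^3)$.

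Once the identity $\int_0^\infty t^k f_m = C_m M_k(m)$ is in hand, the rest is bookkeeping. Normalization $\int f_m=1$ forces $C_m M_0(m)=1$. This requires $M_0(m)>0$, which holds because $W_m\ge0$ is a nonzero trigonometric polynomial and so vanishes only at finitely many points per period. Hence $C_m=M_0(m)^{-1}$. Then $\E[X_m]=C_mM_1(m)=M_1/M_0$ and $\E[X_m^2]=M_2/M_0$. Finally,
\[
  \SCV(X_m)=\frac{\E[X_m^2]-\E[X_m]^2}{\E[X_m]^2}=\frac{M_2M_0}{M_1^2}-1,
\]
which uses $M_1>0$; this holds since $X_m$ is a nonnegative, non-degenerate random variable.

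There is no genuine obstacle here. The only points needing care are justifying the interchange of the finite sum with the integral, which is trivial, and getting the signs right in the shift expansion $\cos(\ell\omega_m(t-1))=\cos(\ell\omega_m)\cos(\ell\omega_m t)+\sin(\ell\omega_m)\sin(\ell\omega_m t)$. That expansion was already recorded in \eqref{eq:fexpanded}.
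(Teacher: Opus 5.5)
Your proposal is correct and matches the paper's proof, which simply integrates \eqref{eq:fexpanded} term by term against $1$, $t$, and $t^2$ using \eqref{eq:JS0}--\eqref{eq:JS2}. Your additional checks are accurate and only fill in details the paper leaves implicit: the closed forms via $k!/(1-\ii a)^{k+1}$, and the positivity of $M_0(m)$ and $M_1(m)$ needed for the divisions.
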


\begin{proof}
Integrate \eqref{eq:fexpanded} against $1$, $t$, and $t^2$ term by term using \eqref{eq:JS0}--\eqref{eq:JS2}.
\end{proof}
\section{Asymptotics and comparisons}\label{sec:comparison}
We first derive the law for $\SCV(X_m)$ in terms of $m$, then translate it using $r_m\asymp\log m$ and $n_m\asymp m\log m$.

\begin{lemma}[Geometric cell decomposition]\label{lem:geom}
Set $q_m:=\e^{-h_m}=1/(m^2\log m)$  from \eqref{eq:params}, let $K_m\sim\mathrm{Geom}(1-q_m)$ on $\{0,1,2,\dots\}$, and let $Y_m$ have the density $g_m(u)\propto\e^{-u}W_m(\omega_m(u-1))\ind_{[0,h_m)}(u)$.
Then $X_m\eqd Y_m+K_m h_m$ with $Y_m$ and $K_m$ independent, and
\begin{align}
  \E[X_m]&=\E[Y_m]+\frac{h_m q_m}{1-q_m},\quad
  \Var(X_m)=\Var(Y_m)+\frac{h_m^2 q_m}{(1-q_m)^2}.\label{eq:var_decomp}
\end{align}
\end{lemma}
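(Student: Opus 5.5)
The plan is to exploit the periodicity of the modulating factor. Since $\omega_m=2\pi/h_m$, the function $u\mapsto W_m(\omega_m(u-1))$ is $h_m$-periodic. Hence for $t=u+kh_m$ with $u\in[0,h_m)$ and $k\in\{0,1,2,\dots\}$ we have
\[
f_m(u+kh_m)=C_m\e^{-u}\e^{-kh_m}W_m(\omega_m(u-1))=\e^{-kh_m}f_m(u).
\]
I will use this identity to compute the joint law of $(K,Y):=(\lfloor X_m/h_m\rfloor,\,X_m-h_m\lfloor X_m/h_m\rfloor)$. The point is that the density of $X_m$ on $[0,\infty)$ factorizes across the cells $[kh_m,(k+1)h_m)$.

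First I check integrability and normalization. Write $Z_m:=\int_0^{h_m}\e^{-u}W_m(\omega_m(u-1))\,\dd u$, which is finite and positive since $W_m\ge0$ is continuous and not identically zero. Then
\[
1=\int_0^\infty f_m=C_m Z_m\sum_{k\ge0}q_m^k=\frac{C_mZ_m}{1-q_m}.
\]
Next, for a Borel set $A\subseteq[0,h_m)$ and integer $k\ge0$,
\[
P(K=k,\;Y\in A)=\int_A f_m(u+kh_m)\,\dd u=q_m^k C_m\int_A \e^{-u}W_m(\omega_m(u-1))\,\dd u=(1-q_m)q_m^k\cdot\frac{\int_A \e^{-u}W_m(\omega_m(u-1))\,\dd u}{Z_m}.
\]
This product form shows three things at once: $K\sim\mathrm{Geom}(1-q_m)$ on $\{0,1,\dots\}$, $Y$ has density $g_m$, and $K$ and $Y$ are independent. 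Since $X_m=Y+Kh_m$ identically, the claimed representation in distribution follows. Along the way I verify $q_m=\e^{-h_m}=\e^{-2\log m-\log\log m}=1/(m^2\log m)$ directly from \eqref{eq:params}.

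The moment formulas \eqref{eq:var_decomp} then follow from independence. We get $\E[X_m]=\E[Y_m]+h_m\E[K_m]$ and $\Var(X_m)=\Var(Y_m)+h_m^2\Var(K_m)$. The remaining input is the standard geometric moments on $\{0,1,\dots\}$ with success probability $1-q_m$:
\[
\E[K_m]=\frac{q_m}{1-q_m},\qquad \Var(K_m)=\frac{q_m}{(1-q_m)^2}.
\]

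There is no real obstacle here. The only step requiring care is the periodicity identity, specifically confirming that the shift $t-1$ inside $W_m$ does not break $h_m$-periodicity. It does not, because $W_m$ is $2\pi$-periodic and $\omega_m kh_m=2\pi k$.
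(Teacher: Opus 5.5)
Your proposal is correct and follows essentially the same route as the paper: the $h_m$-periodicity identity $f_m(u+kh_m)=q_m^k f_m(u)$, normalization via the geometric series (giving $C_m\int_0^{h_m}\e^{-u}W_m(\omega_m(u-1))\,\dd u=1-q_m$), factorization across cells, and then the standard geometric moments. Your explicit product-form computation of $P(K=k,\,Y\in A)$ shows the independence more directly than the paper, which computes the two marginals and then asserts independence from the factorization, but the argument is the same.
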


\begin{proof}
For $t=kh_m+u$ with $k\in\N_0$ and $u\in[0,h_m)$, the periodicity $W_m(\omega_m(t-1))=W_m(\omega_m(u-1)+2\pi k)=W_m(\omega_m(u-1))$ gives
\[
  f_m(t)=C_m\,\e^{-t}\,W_m(\omega_m(t-1))=C_m\,\e^{-kh_m}\,\e^{-u}\,W_m(\omega_m(u-1))\,=\,C_m\,q_m^k\,\e^{-u}\,W_m(\omega_m(u-1)).
\]
Summing over $k\ge0$, the marginal density of $t\bmod h_m$ is proportional to $\e^{-u}W_m(\omega_m(u-1))$ on $[0,h_m)$, which is exactly $g_m$. Integrating over the $k$-th cell $[kh_m,(k+1)h_m)$ gives
\[
  \int_{kh_m}^{(k+1)h_m}f_m(t)\,\dd t = q_m^k\int_0^{h_m}C_m\e^{-u}W_m(\omega_m(u-1))\,\dd u = (1-q_m)q_m^k,
\]
Since $\int_0^{h_m}C_m\e^{-u}W_m(\omega_m(u-1))\,\dd u=1-q_m$, this is the $\mathrm{Geom}(1-q_m)$ mass at $k$. The factorization yields $X_m=Y_m+K_mh_m$ with $Y_m$ and $K_m$ independent, and \eqref{eq:var_decomp} follows from geometric moments.
\end{proof}

Since $h_m\asymp\log m$ and $q_m=1/(m^2\log m)$, the geometric variance term in \eqref{eq:var_decomp} has order $\log m/m^2$. It remains to estimate the base-cell variance; we first need kernel bounds.
\begin{lemma}[Scaled kernel bounds]\label{lem:scaledfejer}
Let $\dist(x,2\pi\Z):=\min_{k\in\Z}|x-2\pi k|$. There exist constants
$0<a_1<a_2<\infty$ and $m_0$ such that for all $m\ge m_0$:
\begin{align}
  m\,\e^{-a_2 m^2x^2}\le \Phi_m(x)\le m\,\e^{-a_1 m^2x^2},
  &\qquad |x|\le \tfrac{5}{m}\quad\text{(peak)},\label{eq:localF}\\
  \Phi_m(x)\le \tfrac{6}{25}m,
  &\qquad \tfrac{5}{m}\le |x|\le \tfrac{20}{m}\quad\text{(transition)},\label{eq:midF}\\
  \Phi_m(x)\le \tfrac{\pi^2}{400}m,
  &\qquad \tfrac{20}{m}\le \dist(x,2\pi\Z)\le \pi\quad\text{(tail)}.\label{eq:farF}
\end{align}
\end{lemma}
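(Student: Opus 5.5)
All three bounds come from the closed form $\Phi_m(x)=\frac{1}{m}\bigl(\sin(mx/2)/\sin(x/2)\bigr)^2$ in \eqref{eq:Fejer}. The plan is to bound the numerator and the denominator separately in each regime, using only the elementary inequalities
\[
\frac{2}{\pi}|y|\le|\sin y|\le|y|\quad(|y|\le\pi/2),\qquad |\sin y|\le 1,
\]
together with Taylor control of $\log(\sin y/y)$ near $0$. Since $\Phi_m$ is even and $2\pi$-periodic, it suffices to take $x\ge0$ in \eqref{eq:localF} and \eqref{eq:midF}, and $0\le x\le\pi$ (with $x=\dist(x,2\pi\Z)$) in \eqref{eq:farF}.

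\emph{Tail and transition.} For $20/m\le x\le\pi$ we have $x/2\le\pi/2$, so $\sin(x/2)\ge x/\pi$. Combined with $\sin^2(mx/2)\le1$ this gives
\[
\Phi_m(x)\le \frac{\pi^2}{m x^2}\le \frac{\pi^2 m}{400},
\]
which is \eqref{eq:farF}. The same estimate on $5/m\le x\le 20/m$ only yields $\pi^2m/25$, which is too weak. Here I would use instead that $x\le 20/m$ is small, so $\sin(x/2)\ge (x/2)(1-x^2/24)$. This gives
\[
\Phi_m(x)\le \frac{4}{m x^2}\,(1+O(m^{-2})) \le \frac{4m}{25}\,(1+O(m^{-2})),
\]
and this is below $\tfrac{6}{25}m$ once $m\ge m_0$. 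That proves \eqref{eq:midF}; the slack between $4/25$ and $6/25$ absorbs the correction term.

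\emph{Peak.} For $|x|\le 5/m$, set $y=mx/2\in[-5/2,5/2]$ and write
\[
\Phi_m(x)=m\,\frac{\sin^2 y}{y^2}\cdot\frac{(x/2)^2}{\sin^2(x/2)}.
\]
The second factor lies in $[1,1+O(m^{-2})]$ uniformly on this range. For the upper bound, note that $\psi(y):=-\log(\sin^2y/y^2)/y^2$ is continuous and positive on $0<|y|\le 5/2$, because $|\sin y|<|y|$ there, and $\psi(y)\to 1/3$ as $y\to0$. So $\psi\ge c>0$ on $[-5/2,5/2]$, which gives $\sin^2y/y^2\le \e^{-cy^2}=\e^{-cm^2x^2/4}$. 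The factor $1+O(m^{-2})\le \e^{O(m^{-2})}$ must then be absorbed. This is the delicate point: near $x=0$ the correction $O(x^2)=O(m^{-2}\cdot m^2x^2/m^2)$ is itself proportional to $x^2$, namely $(x/2)^2/\sin^2(x/2)\le \e^{x^2/6}$ for small $x$. Hence it only perturbs the exponent from $cm^2/4$ to $cm^2/4-1/6$, and taking $a_1=c/8$ works for $m\ge m_0$.

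For the lower bound, the obstacle is that $\sin y$ vanishes at $y=\pi$, and $5/2<\pi$ keeps us strictly away from that zero. So $\Psi(y):=-\log(\sin^2y/y^2)/y^2$ is bounded above on $(0,5/2]$ by some $C<\infty$ (the quantity is continuous, with limit $1/3$ at $0$). Using $(x/2)^2/\sin^2(x/2)\ge1$, we get $\Phi_m(x)\ge m\,\e^{-Cy^2}$, and $a_2=C/4$ works. Finally, take $a_1<a_2$, shrinking $a_1$ if necessary.

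I expect the main obstacle to be the numerical constant in the transition regime. It requires checking that $\sin(x/2)\approx x/2$ is sharp enough that $4/25<6/25$ holds with room to spare, and choosing $m_0$ explicitly enough to absorb the $O(m^{-2})$ corrections.
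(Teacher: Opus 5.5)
Your proof is correct and follows essentially the same route as the paper. You use the same factorization $\Phi_m(x)=m\,(\sin y/y)^2\,\bigl((x/2)/\sin(x/2)\bigr)^2$, a compactness argument giving $-\log(\sin^2y/y^2)\asymp y^2$ on the peak (valid since $5/2<\pi$), $|\sin|\le1$ plus small-angle control of the denominator in the transition, and $\sin u\ge 2u/\pi$ in the tail. Your explicit absorption of $(x/2)^2/\sin^2(x/2)\le \e^{x^2/6}$ into the Gaussian exponent is a more careful version of the paper's ``adjusting the constants'' step. The intermediate expression $O(m^{-2}\cdot m^2x^2/m^2)$ has a stray $/m^2$, but the inequality you actually use is right.
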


\begin{proof}
By $2\pi$-periodicity, it suffices to work on $[-\pi,\pi]$ and prove the tail bound for $20/m\le |x|\le\pi$. Write $\Phi_m(x)/m
  =
  \left(\sin(y/2)/(y/2)\right)^2 \,
  \left((y/(2m))/\sin(y/(2m))\right)^2$, $y=mx.$
On $|y|\le5$, the quotient $\sin(y/2)/(y/2)$ is positive and has no zeros.
$H(y):=-\log\left(\sin(y/2)/(y/2)\right)^2$, $H(0)=0$, is even and analytic on $(-2\pi,2\pi)$. Since $\left(\sin(y/2)/(y/2)\right)^2=1-y^2/12+O(y^4)$
and $-\log(1-z)=z+O(z^2)$, we have $H(y)=y^2/12+O(y^4)$ near $0$.
By compactness, $H(y)\asymp y^2$ on $|y|\le5$, so there are $b_1,b_2>0$ with
$\e^{-b_2y^2}
  \le
  \left(\sin(y/2)/(y/2)\right)^2
  \le
  \e^{-b_1y^2}$, $|y|\le5.$
 As for the second factor, $\left((y/(2m))/\sin(y/(2m))\right)^2 = 1+O(y^2/m^2)$ 
uniformly on $|y|\le5$,
so, adjusting the constants and taking $m$ large enough,
$m\e^{-a_2m^2x^2}\le \Phi_m(x)\le m\e^{-a_1m^2x^2}$, $|x|\le5/m.$
This proves \eqref{eq:localF}.

The remaining two estimates follow from the same representation. First suppose
$5/m\le |x|\le20/m$, equivalently $5\le |y|\le20$. Using the universal bound
$|\sin(\cdot)|\le1$,
$\left(\sin(y/2)/(y/2)\right)^2\le 4/y^2\le4/25$.
Since $|y/(2m)|=o(1)$ uniformly in this region, the second factor
$\left((y/(2m))/\sin(y/(2m))\right)^2$ is at most $3/2$ for large $m$. Thus,
for $5/m\le |x|\le20/m$, $\Phi_m(x)\le(6/25)m$, proving \eqref{eq:midF}.
Finally, for $20/m\le |x|\le\pi$, the inequality $\sin u\ge 2u/\pi$ on
$0\le u\le\pi/2$, with $u=|x|/2$, gives
$|\sin(x/2)|\ge |x|/\pi\ge20/(\pi m)$. Hence, for $20/m\le |x|\le\pi$,
$\Phi_m(x)\le \frac{1}{m\sin^2(x/2)}
\le \frac{\pi^2}{m|x|^2} \le \frac{\pi^2 m}{400}$,
proving \eqref{eq:farF}.
\end{proof}

We apply Lemma~\ref{lem:scaledfejer} on ${[-\omega_m,2\pi-\omega_m]}$: the central peak is leading, while transition and tail parts are negligible.

\begin{theorem}[Base-cell concentration]\label{thm:base_var}
As $m\to\infty$, $\Var(Y_m)=\Theta(\log m/m^2)$ and $\E[Y_m]=1+O(\log m/m^2)$.
\end{theorem}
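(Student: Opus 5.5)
The approach is to work in the centered variable $v:=u-1$ and the angle $x:=\omega_m v$. The base-cell density then becomes $g_m(1+v)\propto \e^{-v}W_m(\omega_m v)$ on $v\in[-1,h_m-1)$, which corresponds to $x\in[-\omega_m,2\pi-\omega_m)$. Since $\omega_m\to0$, this window contains the peak region $|x|\le 5/m$ entirely, and after reduction modulo $2\pi$ every other point lies in the transition or tail region of Lemma~\ref{lem:scaledfejer}. I will split every integral $\int v^p\e^{-v}W_m(\omega_m v)\,\dd v$ ($p=0,1,2$) into the peak part $P$ ($|x|\le5/m$), the transition part $T$ ($5/m\le|x|\le20/m$) and the tail part $R$ (the rest). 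The factor $\e^{-v}$ is bounded by $\e$ on the whole window and lies in $[\e^{-1},\e]$ on $P\cup T$, because there $|v|\le 20/(m\omega_m)=O(\log m/m)$. Also $|v|\le h_m$ everywhere.

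\textbf{Peak asymptotics.} Raise \eqref{eq:localF} to the power $r_m$. This gives $m^{r_m}\e^{-a_2r_mm^2\omega_m^2v^2}\le W_m(\omega_m v)\le m^{r_m}\e^{-a_1r_mm^2\omega_m^2v^2}$ on $P$, i.e.\ two Gaussian envelopes with variance parameter $\sigma_m^2\asymp 1/(r_mm^2\omega_m^2)\asymp h_m^2/(r_m m^2)\asymp \log m/m^2$. The peak window is $|v|\le 5/(m\omega_m)$, which is $\sqrt{r_m}\to\infty$ standard deviations wide. Hence the Gaussian integrals are essentially complete, and
\[
  \int_P W_m\asymp m^{r_m}\sigma_m,\qquad \int_P v^2W_m\asymp m^{r_m}\sigma_m^3 .
\]
For the lower bound on the second moment I restrict to the band $\sigma_m\le|v|\le2\sigma_m$ and use the lower envelope together with $\e^{-v}\ge\e^{-1}$. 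The upper bound uses the upper envelope.

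\textbf{Negligibility of $T$ and $R$.} On $T$, \eqref{eq:midF} gives $W_m\le(6/25)^{r_m}m^{r_m}\le m^{r_m}m^{-\log(25/6)}$, with $\log(25/6)\approx1.43$. The $v$-length of $T$ is $O(\log m/m)$ and $v^2=O(\log^2m/m^2)$ there. Relative to the peak mass $m^{r_m}\sigma_m$, the contributions of $T$ to the zeroth and second moments are therefore $O(m^{-1.43}\,\mathrm{polylog}(m))$ and $O(m^{-3.43}\,\mathrm{polylog}(m))$ respectively. On $R$, \eqref{eq:farF} gives $W_m\le m^{r_m}m^{-\log(400/\pi^2)}$, with exponent $\approx3.7$, while the region has length $\le h_m$ and $v^2\le h_m^2$. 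Relative to $m^{r_m}\sigma_m$, this is $O(m^{-2.7}\,\mathrm{polylog}(m))=o(\log m/m^2)$. So the normalizing constant is $\asymp m^{r_m}\sigma_m$, and $\E[(Y_m-1)^2]\asymp\sigma_m^2\asymp\log m/m^2$.

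\textbf{The mean.} This is the step needing most care, since the first-order term must cancel. On the symmetric set $P\cup T$, $W_m(\omega_m v)$ is even in $v$, so
\[
  \int_{P\cup T} v\,\e^{-v}W_m=-\int_{P\cup T} v\sinh(v)\,W_m ,
\]
and $|v\sinh v|\le \cosh(1)\,v^2$ there. This piece is therefore bounded by a constant times the second-moment integral, giving a contribution $O(\sigma_m^2)$ after normalization. The window $[-1,h_m-1)$ is asymmetric, but all asymmetry lies in $R$, whose first-moment contribution is $O(m^{-2.7}\,\mathrm{polylog}(m))$ relative to the normalization. Hence $\E[Y_m]-1=O(\log m/m^2)$. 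Finally,
\[
  \Var(Y_m)=\E[(Y_m-1)^2]-(\E[Y_m]-1)^2=\Theta(\log m/m^2)-O(\log^2m/m^4),
\]
which gives $\Theta(\log m/m^2)$. The main obstacle I expect is keeping the constants uniform. Lemma~\ref{lem:scaledfejer} only provides two-sided envelopes with different constants $a_1<a_2$, so the argument yields $\Theta$ rather than an exact asymptotic. I also have to check that the exponents $\log(25/6)$ and $\log(400/\pi^2)$ indeed beat the polynomial factors coming from $\sigma_m^{-1}\asymp m/\sqrt{\log m}$.
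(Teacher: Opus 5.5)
Your proposal is correct and follows essentially the same route as the paper. Both split the cell $[-\omega_m,2\pi-\omega_m)$ into peak, transition and tail regions via Lemma~\ref{lem:scaledfejer}, use Gaussian envelopes raised to the power $r_m$ on the peak, let $(6/25)^{r_m}$ and $(\pi^2/400)^{r_m}$ kill the other two regions, and get the mean from evenness of $W_m$ plus a second-moment bound. The only differences are cosmetic: you work in $v=u-1$ instead of $x=\omega_m(u-1)$, and you cancel via the $\sinh$ identity on the symmetric set $P\cup T$ rather than a Taylor bound on $I_0$ with $I_1$ handled separately.
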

\begin{proof}
Write $r=r_m$, $\omega=\omega_m$, and ${W=W_m}$. With $x=\omega(u-1)$,
\[
  A_m=\frac{\e^{-1}}{\omega}\int_{-\omega}^{2\pi-\omega}\e^{-x/\omega}W(x)\,\dd x,\qquad
  N_{2,m}=\frac{\e^{-1}}{\omega^3}\int_{-\omega}^{2\pi-\omega}x^2\e^{-x/\omega}W(x)\,\dd x,
\]
so $\E[(Y_m-1)^2]=N_{2,m}/A_m$. Set
$I_0=\{x\in[-\omega,2\pi-\omega]: |x|\le5/m\}$, $I_1=\{x\in[-\omega,2\pi-\omega]: 5/m<|x|\le20/m\}$
and $I_2=\{x\in[-\omega,2\pi-\omega]: |x|>20/m\}$. Since $\omega\asymp1/\log m$, for large $m$ we have $20/m<\omega$, so $I_0$ and $I_1$ are full symmetric intervals around the origin.

On $I_0$, Lemma~\ref{lem:scaledfejer} gives, after raising to the power $r$,
$W(x)=\Theta(m^r\e^{-{\kappa}rm^2x^2})$ in the two-sided Gaussian sense.
Since $|x|/\omega\le5/(m\omega)=o(1)$, $\e^{-x/\omega}$ is uniformly $1+o(1)$ and does not change the order. With the change of variables $y=m\sqrt r\,x$, we obtain
\[
  \int_{I_0}\e^{-x/\omega}W(x)\,\dd x
  =\Theta\!\left({\frac{m^r}{m\sqrt r}
    \int_{-5\sqrt r}^{5\sqrt r}\e^{-{\kappa}y^2}\,\dd y}\right)
  =\Theta\!\left(\frac{m^r}{m\sqrt r}\right),
\]
\[
  \int_{I_0}x^2\e^{-x/\omega}W(x)\,\dd x
  =\Theta\!\left({\frac{m^r}{m^3r^{3/2}}
    \int_{-5\sqrt r}^{5\sqrt r}y^2\e^{-{\kappa}y^2}\,\dd y}\right)
  =\Theta\!\left(\frac{m^r}{m^3r^{3/2}}\right).
\]
$I_0$ supplies the leading mass and second moment about $1$.

On $I_1$, \eqref{eq:midF} gives $W(x)\le(6m/25)^r$. Since $|I_1|=O(1/m)$ and $x^2=O(m^{-2})$ on this region,
\[
\begin{aligned}
  \int_{I_1}\e^{-x/\omega}W(x)\,\dd x
  &\le C\,|I_1|\left(\frac{6m}{25}\right)^r
  =O\!\left(\frac{1}{m}\left(\frac{6m}{25}\right)^r\right)
  =O\!\left(m^{r-1}\left(\frac{6}{25}\right)^r\right),\\
  \int_{I_1}x^2\e^{-x/\omega}W(x)\,\dd x
  &\le C\,m^{-2}|I_1|\left(\frac{6m}{25}\right)^r
  =O\!\left(\frac{1}{m^3}\left(\frac{6m}{25}\right)^r\right)
  =O\!\left(m^{r-3}\left(\frac{6}{25}\right)^r\right).
\end{aligned}
\]
On $I_2$, points with $x\le\pi$ satisfy $\dist(x,2\pi\Z)=|x|>20/m$, while for $x>\pi$ we have $x\le2\pi-\omega$, hence $\dist(x,2\pi\Z)=2\pi-x\ge\omega>20/m$ by the choice of large $m$. Hence the tail bound in Lemma~\ref{lem:scaledfejer} applies throughout $I_2$, giving $W(x)\le(\pi^2m/400)^r$. Also, $I_2$ has bounded length, $x^2=O(1)$, and $\e^{-x/\omega}\le\e$. Thus
\[
  \int_{I_2}(1+x^2)\e^{-x/\omega}W(x)\,\dd x \le C\,|I_2|\left(\frac{\pi^2m}{400}\right)^r =O\!\left(m^r\left(\frac{\pi^2}{400}\right)^r\right).
\]
Relative to the $I_0$ scales, the largest transition and tail errors are bounded by $(6/25)^r r^{3/2}$ and $(\pi^2/400)^r m^3r^{3/2}$ for the second moment, and by $(6/25)^r\sqrt r$ and $(\pi^2/400)^r m\sqrt r$ for the mass. All vanish because $r=\lceil\log m\rceil$, $(\pi^2/400)^r m^3 \le m^{-\delta}$ for some $\delta>0$, and $6/25<\e^{-1}$. Therefore
\[
  A_m=\Theta\!\left(\frac{m^r}{\omega m\sqrt r}\right),\qquad
  N_{2,m}=\Theta\!\left(\frac{m^r}{\omega^3m^3r^{3/2}}\right),\quad\mbox{and}\quad
  \E[(Y_m-1)^2]=\Theta\!\left(\frac{1}{\omega^2m^2r}\right)=\Theta\!\left(\frac{\log m}{m^2}\right).
\]
For the mean, the unperturbed first moment on $I_0$ cancels because  $W(\cdot)$ is even: $\int_{I_0}xW(x)\,\dd x=0$. Hence
\[
  \int_{I_0}x\e^{-x/\omega}W(x)\,\dd x
  =
  \int_{I_0}x(\e^{-x/\omega}-1)W(x)\,\dd x.
\]
Using the Taylor bound $|\e^{-x/\omega}-1|\le C|x|/\omega$ on $I_0$ and the second-moment estimate above gives
\[
  \int_{I_0}x\e^{-x/\omega}W(x)\,\dd x=O\!\left(\frac{m^r}{\omega m^3r^{3/2}}\right).
\]
The transition and tail first moments satisfy
\[
  \int_{I_1}|x|\e^{-x/\omega}W(x)\,\dd x=O\!\left(m^{r-2}\left(\frac{6}{25}\right)^r\right),\qquad
  \int_{I_2}|x|\e^{-x/\omega}W(x)\,\dd x=O\!\left(m^r\left(\frac{\pi^2}{400}\right)^r\right).
\]
These are $o(m^r/(\omega m^3r^{3/2}))$: for $I_1$ this uses $6/25<\e^{-1}$, and for $I_2$ it uses $\pi^2/400<\e^{-3}$. Thus
\[
  \E[Y_m]-1=\frac{1}{\omega}
  \frac{O(m^r/(\omega m^3r^{3/2}))}{\Theta(m^r/(m\sqrt r))}
  =O\!\left(\frac{1}{\omega^2m^2r}\right)=O\!\left(\frac{\log m}{m^2}\right).
\]
Finally, $\Var(Y_m)=\E[(Y_m-1)^2]-(\E[Y_m]-1)^2$, and $(\E[Y_m]-1)^2=O(\log^2 m/m^4)=o(\log m/m^2)$. Thus $\Var(Y_m)=\Theta(\log m/m^2)$.
\end{proof}

\begin{theorem}[Concentration law]\label{thm:main}
Let $X_m\sim\mathrm{PF\mbox{-}CME}_m$, whose minimal ME order is $n_m=2r_m(m-1)+1$. Then $ \Var(X_m)=\Theta\!\left(\frac{\log m}{m^2}\right)$ and $\E[X_m]=1+O\!\left(\frac{\log m}{m^2}\right)$,
and consequently $\SCV(X_m)=\Theta(\log m/m^2)=\Theta(\log^3 n_m/n_m^2)$.
\end{theorem}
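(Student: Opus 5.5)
The plan is to combine the geometric cell decomposition of Lemma~\ref{lem:geom} with the base-cell estimates of Theorem~\ref{thm:base_var}, and then translate from the index $m$ to the dimension $n_m$ using Proposition~\ref{prop:order}.

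\emph{Step 1: the correction terms.} With $q_m=1/(m^2\log m)$ and $h_m=2\log m+\log\log m\asymp\log m$, we have
\[
\frac{h_mq_m}{1-q_m}\asymp \frac{\log m}{m^2\log m}=\frac{1}{m^2},\qquad
\frac{h_m^2q_m}{(1-q_m)^2}\asymp\frac{\log^2 m}{m^2\log m}=\frac{\log m}{m^2}.
\]
Both are nonnegative and of order at most $\log m/m^2$.

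\emph{Step 2: mean and variance.} In \eqref{eq:var_decomp}, Theorem~\ref{thm:base_var} gives $\E[Y_m]=1+O(\log m/m^2)$. Adding the $O(1/m^2)$ term yields $\E[X_m]=1+O(\log m/m^2)$. For the variance, $\Var(X_m)=\Var(Y_m)+\Theta(\log m/m^2)$, and $\Var(Y_m)=\Theta(\log m/m^2)$ by Theorem~\ref{thm:base_var}. Since both summands are nonnegative and of the same order, the sum is $\Theta(\log m/m^2)$: the lower bound comes from either term alone and the upper bound from adding the two upper bounds. Because $\E[X_m]\to1$, we have $\E[X_m]^2\in[1/2,2]$ eventually, so
\[
\SCV(X_m)=\frac{\Var(X_m)}{\E[X_m]^2}=\Theta\!\left(\frac{\log m}{m^2}\right).
\]

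\emph{Step 3: conversion to $n_m$.} By Proposition~\ref{prop:order}, $n_m=2\lceil\log m\rceil(m-1)+1\asymp m\log m$. Hence $\log n_m=\log m+\log\log m+O(1)\sim\log m$, so $m\asymp n_m/\log m\asymp n_m/\log n_m$. Substituting,
\[
\frac{\log m}{m^2}\asymp\log n_m\cdot\frac{\log^2 n_m}{n_m^2}=\frac{\log^3 n_m}{n_m^2}.
\]
Because $\Theta$ is stable under multiplication by two-sided bounded factors, this gives $\SCV(X_m)=\Theta(\log^3 n_m/n_m^2)$.

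The real analytic work sits in Theorem~\ref{thm:base_var}, which is assumed here. The one point needing care is the direction of the inequalities in the $\Theta$-bounds for the variance. This is handled by the nonnegativity of the geometric term, together with the fact that $\E[X_m]$ is bounded away from $0$, so dividing by $\E[X_m]^2$ preserves the two-sided bounds.
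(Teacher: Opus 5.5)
Your proposal is correct and follows essentially the same route as the paper: you plug Theorem~\ref{thm:base_var} and the geometric terms $h_mq_m/(1-q_m)\asymp 1/m^2$ and $h_m^2q_m/(1-q_m)^2\asymp\log m/m^2$ into Lemma~\ref{lem:geom}, then convert via $n_m\asymp m\log m$. Your explicit remarks on the nonnegativity of the geometric term and on $\E[X_m]^2$ being bounded away from $0$ spell out steps the paper leaves implicit.
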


\begin{proof}
By Lemma~\ref{lem:geom}, $\Var(X_m)=\Var(Y_m)+h_m^2q_m/(1-q_m)^2=\Theta(\log m/m^2)$, using Theorem~\ref{thm:base_var} and $q_m=1/(m^2\log m)$. Also
$\E[X_m]=\E[Y_m]+h_mq_m/(1-q_m)=1+O(\log m/m^2)$, hence $\SCV(X_m)=\Theta(\log m/m^2)$. Finally, $n_m\asymp m\log m$ gives $\log m/m^2\asymp\log^3 n_m/n_m^2$.
\end{proof}

\begin{corollary}\label{cor:diagnostic}
Since $\log^3 n_m/n_m^2=o(1/n_m)$, PF-CME asymptotically surpasses Erlang. Moreover, for $X_m\sim\mathrm{PF\mbox{-}CME}_m$,
$(m^2/\log m)\,\SCV(X_m)$ and $(n_m^2/\log^3 n_m)\,\SCV(X_m)$ stay bounded away from $0$ and $\infty$.
\end{corollary}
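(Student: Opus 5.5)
The corollary is a direct consequence of Theorem~\ref{thm:main}, and I would prove it in two steps: first the comparison with the Erlang bound \eqref{eq:erlangbound}, then the two boundedness statements.

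\textbf{Comparison with Erlang.} Theorem~\ref{thm:main} gives $\SCV(X_m)=\Theta(\log^3 n_m/n_m^2)$, so there is a constant $C$ with $\SCV(X_m)\le C\log^3 n_m/n_m^2$ for large $m$. Multiplying by $n_m$ gives
\[
  n_m\,\SCV(X_m)\le C\,\frac{\log^3 n_m}{n_m}\to0,
\]
because $n_m=2r_m(m-1)+1\to\infty$ by Proposition~\ref{prop:order}. Hence $\SCV(X_m)=o(1/n_m)$. By \eqref{eq:erlangbound}, every $\PH(n_m)$ law, the Erlang law included, has $\SCV\ge 1/n_m$. Therefore $\SCV(X_m)<1/n_m$ for all sufficiently large $m$.

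\textbf{Boundedness.} By the definition of $\Theta$ fixed in the introduction, $\SCV(X_m)=\Theta(\log m/m^2)$ gives constants $0<c\le C$ with
\[
  c\le (m^2/\log m)\,\SCV(X_m)\le C
\]
eventually. For the second quantity, I would make the equivalence $\log m/m^2\asymp\log^3 n_m/n_m^2$ explicit rather than just cite it. Since $r_m=\lceil\log m\rceil\in[\log m,\log m+1]$, we get $m\log m\le n_m\le 3m\log m$ for large $m$. Taking logarithms gives $\log n_m=\log m+O(\log\log m)$, so $\log n_m\sim\log m$. Substituting these,
\[
  \frac{\log^3 n_m}{n_m^2}\asymp\frac{\log^3 m}{m^2\log^2 m}=\frac{\log m}{m^2},
\]
with explicit constants, for instance in $[\tfrac{1}{18},2]$ eventually. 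Multiplying the two-sided bounds then shows that $(n_m^2/\log^3 n_m)\,\SCV(X_m)$ is also eventually bounded away from $0$ and $\infty$.

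\textbf{Main obstacle.} There is no real difficulty here; the only care needed is the logarithm comparison $\log n_m\sim\log m$. The one subtlety is that "stay bounded away from $0$ and $\infty$" should hold for all $m\ge2$, not just eventually. For the finitely many small $m$, each quantity is a finite positive number: $\SCV(X_m)>0$ because $X_m$ has a density and so is non-degenerate, and $\SCV(X_m)$ is finite because all moments of $X_m$ are finite (Theorem~\ref{thm:moments}). Taking the minimum and maximum over these finitely many values together with the eventual constants gives global bounds.
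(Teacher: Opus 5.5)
Your proposal is correct and follows the paper's own route: the corollary is read off directly from Theorem~\ref{thm:main}. The Erlang comparison comes from $\log^3 n_m/n_m\to0$, and both boundedness claims follow from the two $\Theta$-forms of the SCV law. You spell out $m\log m\le n_m\le 3m\log m$ and $\log n_m\sim\log m$, which the paper uses implicitly, and you add the finite-$m$ argument ($0<\SCV(X_m)<\infty$) to get bounds for all $m\ge2$ rather than only eventually, a harmless strengthening.
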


Theorem~\ref{thm:moments} gives the crossover between $m=519$ and $m=520$.
\begin{center}
\vspace{-0.4em}
\refstepcounter{table}\label{tab:scv_comparison}
{\footnotesize\textbf{Table~\thetable.} PF-CME family: {$\SCV(X_m)$}, ratio $\SCV(X_m)\cdot n_m$ to the Erlang bound
(below $1$ means PF-CME beats Erlang), and scaling diagnostics.}
\par\vspace{0.15em}
{\scriptsize
\setlength{\tabcolsep}{2.7pt}\renewcommand{\arraystretch}{0.86}
\begin{tabular}{@{}rrrcccc@{}}
\toprule
$m$ & $r_m$ & $n_m$ & {$\SCV(X_m)$} & $\SCV(X_m)\cdot n_m$ 
& $(m^2/\log m)\SCV(X_m)$ & $(n_m^2/\log^3 n_m)\SCV(X_m)$ \\
\midrule
200 & 6 & 2389 & $8.003{\times}10^{-4}$ & 1.91 & 6.04 & 9.70 \\
400 & 6 & 4789 & $2.265{\times}10^{-4}$ & 1.08 & 6.05 & 8.53 \\
519 & 7 & 7253 & $1.379{\times}10^{-4}$ & 1.000 & 5.94 & 10.33 \\
\textbf{520} & 7 & 7267 & $1.374{\times}10^{-4}$ & \textbf{0.999} & 5.94 & 10.33 \\
1000 & 7 & 13987 & $4.106{\times}10^{-5}$ & 0.574 & 5.94 & 9.23 \\
2000 & 8 & 31985 & $1.112{\times}10^{-5}$ & 0.356 & 5.85 & 10.19 \\
\bottomrule
\end{tabular}}
\vspace{-0.6em}
\end{center}

\section{Conclusion}\label{sec:conclusion}

PF-CME is the first closed-form, optimization-free common-damping ME construction that provably surpasses the Erlang bound. For each $m$, all parameters and exact moments are explicit. This settles an open question from prior numerical studies \cite{horvath2016,horvath2020,horvathnilt2020,meszaros2022}: along the subsequence ${n=n_m}$, Theorem~\ref{thm:main} shows that ${\SCV(X_m)<1/n_m}$ is structural (indeed ${\SCV(X_m)=\Theta(\log^3 n_m/n_m^2)}$), not an artifact of high-dimensional search. Although optimization-based methods achieve lower SCVs at moderate orders, PF-CME gives a rigorous asymptotic guarantee and a benchmark for localized numerical refinement. Whether a purely closed-form CME family achieving $\SCV=\Theta(n^{-2})$ exists remains an open question.


\begin{thebibliography}{99}

\bibitem{aldous1987}
David Aldous and Larry Shepp.
\newblock The least variable phase type distribution is Erlang.
\newblock \emph{Stochastic Models}, 3(3):467--473, 1987.
\newblock doi: \url{https://doi.org/10.1080/15326348708807067}.

\bibitem{horvath2016}
Ill{\'e}s Horv{\'a}th, Orsolya S{\'a}f{\'a}r, Mikl{\'o}s Telek, and Bence Z{\'a}mb{\'o}.
\newblock Concentrated matrix exponential distributions.
\newblock In D. Fiems, M. Paolieri, and A. Platis, editors, \emph{Computer Performance Engineering. EPEW 2016}, Lecture Notes in Computer Science, volume 9951, pages 18--31. Springer, Cham, 2016.
\newblock doi: \url{https://doi.org/10.1007/978-3-319-46433-6_2}.

\bibitem{horvath2020}
G{\'a}bor Horv{\'a}th, Ill{\'e}s Horv{\'a}th, and Mikl{\'o}s Telek.
\newblock High order concentrated matrix-exponential distributions.
\newblock \emph{Stochastic Models}, 36(2):176--192, 2020.
\newblock doi: \url{https://doi.org/10.1080/15326349.2019.1702058}.

\bibitem{meszaros2022}
Andr{\'a}s M{\'e}sz{\'a}ros and Mikl{\'o}s Telek.
\newblock Concentrated matrix exponential distributions with real eigenvalues.
\newblock \emph{Probability in the Engineering and Informational Sciences}, 36(4):1171--1187, 2022.
\newblock doi: \url{https://doi.org/10.1017/S0269964821000309}.

\bibitem{horvathnilt2020}
G{\'a}bor Horv{\'a}th, Ill{\'e}s Horv{\'a}th, Salah Al-Deen Almousa, and Mikl{\'o}s Telek.
\newblock Numerical inverse Laplace transformation using concentrated matrix exponential distributions.
\newblock \emph{Performance Evaluation}, 137:102067, 2020.
\newblock doi: \url{https://doi.org/10.1016/j.peva.2019.102067}.

\bibitem{bladt2017}
Mogens Bladt and Bo Friis Nielsen.
\newblock \emph{Matrix-Exponential Distributions in Applied Probability}.
\newblock Probability Theory and Stochastic Modelling, volume 81. Springer New York, NY, 2017.
\newblock doi: \url{https://doi.org/10.1007/978-1-4939-7049-0}.

\bibitem{horvath2018}
Ill{\'e}s Horv{\'a}th, Zs{\'o}fia Talyig{\'a}s, and Mikl{\'o}s Telek.
\newblock An optimal inverse Laplace transform method without positive and negative overshoot -- an integral based interpretation.
\newblock \emph{Electronic Notes in Theoretical Computer Science}, 337:87--104, 2018.
\newblock doi: \url{https://doi.org/10.1016/j.entcs.2018.03.035}.

\bibitem{zygmund2002trigonometric}
Antoni Zygmund.
\newblock \emph{Trigonometric Series}.
\newblock 3rd edition, volumes I and II combined. Cambridge University Press, Cambridge, UK, 2002.

\end{thebibliography}
\end{document}